\newtheorem{theorem}{Theorem}
\newtheorem*{theorem*}{Theorem}
\newtheorem{lemma}{Lemma}
\renewcommand{\geq}{\geqslant}
\renewcommand{\leq}{\leqslant}
\newcommand{\R}{\mathbb{R}}
\newcommand{\E}{\mathcal{E}}
\newcommand{\tr}{\mathrm{tr }}
\newcommand{\la}{\langle}
\newcommand{\ra}{\rangle}
\begin{document}

\title{A short proof for the characterisation of tight frames}

\author{Gergely Ambrus}
\email[G. Ambrus]{ambrus@renyi.hu}

\address{\'Ecole Polytechnique F\'ed\'erale de Lausanne, EPFL SB  MATHGEOM DCG
Station 8, 
CH-1015 Lausanne, Switzerland\\
and\\
Alfr\'ed R\'enyi Institute of Mathematics, Hungarian Academy of Sciences, Re\'altanoda u. 13-15, 1053 Budapest, Hungary}
%\date{\today}
\thanks{Research was supported by OTKA grants 75016 and
76099.}
%\keywords{Orthogonal crosspolytopes, Gaussian vectors.}
%\subjclass[2010]{52A40(primary), and 60D05(secondary)}

\maketitle

\begin{abstract}
With the aid of utilising tensor products, we give a simplified proof to the fundamental theorem of Benedetto and Fickus \cite{BF03} about the existence and characterisation of finite, normalised tight frames. We also establish unit-norm tensor resolutions for symmetric, positive semi-definite matrices.
\end{abstract}

\section{Introduction}

A set of vectors $u_1, \ldots, u_N$ in $\R^n$ is called a {\em tight frame}, if there exists a constant $\lambda \in \R$ so that for every vector $x \in \R^n$,
\[
\lambda x = \sum_{i=1}^N u_i \la x, u_i \ra
\]
holds; using tensor product notation, the above equation transforms to
\begin{equation}\label{fra}
\sum_{i=1}^N u_i \otimes u_i = \lambda \, I_n.
\end{equation}
In the special case when all the vectors $u_i$ are of norm 1, we are talking about a {\em normalised} (or {\em unit-norm}) {\em tight frame}. By comparing traces in \eqref{fra}, it immediately follows that in this case, $\lambda = N/n$.

The theory of frames was initiated by Duffin and Schaffer \cite{DS52} in 1952. For the rich history of the subject, we refer the interested reader to the articles \cite{BF03, CFKLT06}.  
The most well-known example is the orthonormal system in $\R^n$; here $n = N$ holds. It is an essential question whether normalised tight frames exist for all $n$ and $N$ satisfying $N \geq n$. This has been answered affirmatively by \cite{GKK01} and \cite{Z01} in 2001, by providing an explicit construction. A bit later, a new proof was found by Benedetto and Fickus \cite{BF03}. The authors do not provide an explicit construction, rather they prove that normalised tight frames are minimisers of an adequately chosen potential function. We are mainly interested in this result.

To any vector system $(u_i)_1^N$ of $N$ vectors in $\subset \R^n$, we associate the {\em frame potential} by
\begin{equation}\label{framepot}
FP\left((u_i)_1^N\right) = \sum_{i,j = 1} ^N \la u_i, u_j \ra ^2.
\end{equation}

\noindent
The main result of \cite{BF03} asserts the following (see Theorem 7.1 therein).

\begin{theorem} \label{thmframe}
Among the systems of $N$ vectors on the unit sphere $S^{n-1}$, every local minimiser of the frame potential is also a global minimiser. Furthermore, these extremal vector systems are orthonormal sets if $N \leq n$, and normalised tight frames if $N \geq n$.
\end{theorem}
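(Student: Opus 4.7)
The plan is to use the tensor product identity $FP = \|M\|_F^2$ where $M := \sum_i u_i \otimes u_i$ is symmetric positive semi-definite with $\tr M = N$ and rank at most $r := \min(N, n)$. Applying Cauchy--Schwarz to the at most $r$ nonzero eigenvalues $\mu_k$ of $M$ yields $FP = \sum_k \mu_k^2 \geq N^2/r$, with equality iff all nonzero $\mu_k$ coincide. This forces either $M$ to be an orthogonal projection of rank $N$ (whence $(u_i)$ is orthonormal, $N \leq n$) or $M = (N/n) I_n$ (the tight frame condition, $N \geq n$), giving the characterisation of global minimisers.

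For local minimisers, I would first derive the Euler--Lagrange condition: $\nabla_{u_i} FP = 4 M u_i$, so every critical point of $FP$ on $(S^{n-1})^N$ has each $u_i$ an eigenvector of $M$. Grouping the indices by eigenspace $V_k$ and writing $I_k := \{i : u_i \in V_k\}$, the restriction of $M$ to $V_k$ satisfies $\sum_{i \in I_k} u_i u_i^T = \mu_k P_{V_k}$; hence $(u_i)_{i \in I_k}$ is a unit-norm tight frame in $V_k$ with constant $\mu_k = |I_k|/\dim V_k$, so $\mu_k \geq 1$ whenever $\mu_k > 0$.

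The key second-order step is a coordinated multi-vector perturbation that cancels the first-order variation. Suppose at a critical point two distinct eigenvalues $\mu_1 > \mu_\ell$ occur (possibly $\mu_\ell = 0$). Necessarily $\mu_1 > 1$, since any nonzero eigenvalue is at least $1$; hence $|I_1| > \dim V_1$ and there exist nontrivial reals $(\lambda_i)_{i \in I_1}$ with $\sum \lambda_i u_i = 0$. Picking any unit vector $v \in V_\ell$ (so $v \perp u_i$ for $i \in I_1$) and setting $u_i(\theta) := \cos(\theta \lambda_i) u_i + \sin(\theta \lambda_i) v$ for $i \in I_1$ while leaving the other vectors fixed, a direct expansion using $\sum \lambda_i u_i = 0$ gives $\Delta M = \theta^2 \bigl( \|\lambda\|^2 v v^T - \sum_i \lambda_i^2 u_i u_i^T \bigr) + O(\theta^3)$, whence
\[
\Delta FP = 2 \tr(M \Delta M) + O(\theta^4) = 2\theta^2 \|\lambda\|^2 (\mu_\ell - \mu_1) + O(\theta^4) < 0
\]
for $\theta > 0$ small, contradicting local minimality. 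Thus every local minimiser has $M$ with a single nonzero eigenvalue, forcing the orthonormal or tight frame form.

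The main conceptual obstacle is finding the right perturbation. Naive single-vector rotations only yield the weak bound $\mu_{\max} - \mu_{\min} \leq 1$, which is insufficient to force all eigenvalues equal. The coordinated perturbation above, whose weights $(\lambda_i)$ exploit the linear dependence of vectors inside any eigenspace with $\mu > 1$, is what kills the first-order variation and allows the sign of $\mu_\ell - \mu_1$ to directly drive the decrease of $FP$.
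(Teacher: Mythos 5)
Your proposal follows the paper's proof almost exactly: rewriting $FP$ as the Hilbert--Schmidt norm of the frame operator (your $M$, the paper's $S$), Cauchy--Schwarz for the global minimum, the first-order Lagrange condition forcing each $u_i$ to be an eigenvector, partitioning by eigenspace, extracting a linear dependence in the top eigenspace, and using that dependence to design a coordinated perturbation whose first-order variation cancels, leaving the sign of $\mu_\ell - \mu_1$ to drive the decrease. The second-order coefficient you obtain, $2\theta^2 \|\lambda\|^2(\mu_\ell - \mu_1)$, is exactly the paper's $-2\delta^2(\lambda - \mu)\sum c_i^2$.

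There is, however, one genuine gap. You assert ``Necessarily $\mu_1 > 1$, since any nonzero eigenvalue is at least $1$'' while explicitly allowing $\mu_\ell = 0$. This inference only yields $\mu_1 \geq 1$; it does not rule out $\mu_1 = 1$. And $\mu_1 = 1$ does occur: if $(u_i)_1^N$ is an orthonormal set with $N < n$, then $M$ has eigenvalues $1$ (with multiplicity $N$) and $0$ (with multiplicity $n-N$), so there are two distinct eigenvalues but the largest equals $1$, $|I_1| = \dim V_1$, and no nontrivial linear dependence $\sum \lambda_i u_i = 0$ exists. As written, your argument would then falsely ``conclude'' that orthonormal sets with $N < n$ admit an $FP$-decreasing perturbation, i.e.\ are not local minima. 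The paper sidesteps this by first restricting to the linear span of the $u_i$, after which $N \geq n$ and any non-tight critical point has largest eigenvalue strictly above the mean $N/n \geq 1$. Your proof needs either that same initial reduction, or an explicit case split: if $\mu_1 = 1$ then every nonzero eigenvalue equals $1$, $M$ is a rank-$N$ projection, the Gram matrix is the identity and the set is orthonormal (hence a global minimiser); only when $\mu_1 > 1$ does the dependence-based perturbation apply. With that one-line patch the argument is complete.
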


The goal of this note is to give an elegant and short proof to the above result based on the method of \cite{BF03}. Our presentation is more transparent though, thanks to the notion of tensor products. For the sake of simplicity, we chose to work in the real setting; the proofs translate to the complex case without difficulty.

\section{Tensor products and positive definite matrices}

Let $u, v$ be  $n$-dimensional real vectors with coordinates $u_1, \ldots, u_n$ and $v_1, \ldots, v_n$, respectively. The {\em tensor product} of $u$ and $v$ is the $\R^n \rightarrow \R^n$ linear map $u \otimes v$  satisfying
\[
(u \otimes v) z =  u \la z,v \ra
\]
for every $z \in \R^n$. In matrix form, $u \otimes v$ is the $n \times n $ matrix with entries
\[
(u \otimes v)_{ij} = u_i v_j\,.
\]
As an immediate consequence of the definition, we derive that
\[
\tr (u \otimes v) =  \la u, v \ra\,.
\]
If $u$ is a unit vector, then $u \otimes u$ is the projection onto to linear span of $u$. 

The natural inner product on the space of $n \times n$ real matrices is given by
\[
\la A, B \ra := \tr (A B^\top) = \sum_{i,j =1 }^n a_{ij} b_{ij},
\]
where $A, B \in \R^{n \times n}$. It induces the {\em Hilbert-Schmidt norm}:
\[
\|A\|_{HS}  = \|A\|= \la A, A \ra ^{1/2} = \left(\sum_{i,j=1}^n a_{ij}^2 \right)^{1/2}\,.
\]
Inner products and tensor products interact nicely:
\begin{equation}\label{tuv}
\la T, u \otimes v \ra = \sum_{i,j} t_{ij} u_i v_j =  \la Tv, u \ra = \la T^\top u, v \ra\,.
\end{equation}
In particular, for vectors $u_1,u_2,v_1,v_2 \in \R^n$,
\begin{equation}\label{tensprod}
\la u_1 \otimes v_1, u_2 \otimes v_2 \ra = \la u_1, u_2 \ra \la v_1, v_2 \ra.
\end{equation}

Let now $M$ be a positive semi-definite, symmetric, $n \times n$ real matrix. By the spectral theorem, there exists an orthonormal basis $u_1, \ldots, u_n$ of $\R^n$ consisting of eigenvectors of $M$. If $\lambda_i$ denotes the (non-negative) eigenvalue corresponding to the eigenvector $u_i$ for $i =1, \ldots, n$, then
\[
M = \sum_{i=1}^n \lambda_i u_i \otimes u_i \, = \sum_{i=1}^n (\sqrt{\lambda_i} u_i) \otimes (\sqrt{\lambda_i} u_i);
\]
this is the spectral resolution of $M$.

In this section, we prove that such a decomposition of the matrix $M$ also exists if we require the vectors in the summands to have the same norm, which, naturally, depends on the trace of $M$. Although we will not make a direct use of this result in the course of  the proof of Theorem~\ref{thmframe}, it provides an important insight to the problem.

A {\em centred ellipsoid} $\E$ in $\R^n$ is defined by
\begin{equation}\label{ell}
\E = \{ x \in \R^n: x^\top M x = 1\}
\end{equation}
for a symmetric, positive semi-definite $n \times n$ real matrix $M$ (note that we use the term for hollow shell). If the matrix $M$ is non-singular, then $\E$ is non-degenerate; in this case, $\E = M^{-1/2} S^{n-1}$, where $M^{1/2}$ is the positive definite square-root of $M$.

\begin{lemma}\label{elllemma}
Every non-degenerate, centred ellipsoid in $\R^n$ contains an orthogonal system of vectors of the same length.
\end{lemma}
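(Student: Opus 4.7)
The plan is to translate the geometric statement into a matrix-diagonalisation problem and then use an intermediate value argument. Since $M$ is positive definite and symmetric, finding $n$ pairwise orthogonal vectors $v_1, \ldots, v_n \in \E$ of common length $r$ is the same as picking an orthonormal basis $e_1, \ldots, e_n$ of $\R^n$ and setting $v_i = r\,e_i$. The condition $v_i \in \E$ becomes $e_i^\top M e_i = 1/r^2$ for every $i$. Summing over $i$ forces $1/r^2 = \tr(M)/n$, so the task reduces to finding an orthonormal basis along which the quadratic form associated with $M$ has constant diagonal $\tr(M)/n$.

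I would construct such a basis inductively. Suppose $e_1, \ldots, e_k$ have already been chosen with $e_i^\top M e_i = \tr(M)/n$, and let $V = \mathrm{span}(e_1,\ldots,e_k)^\perp$. By the trace identity, the average of $f^\top M f$ over any orthonormal basis $f_1, \ldots, f_{n-k}$ of $V$ equals $\tr(M)/n$. If $n-k=1$, the single remaining unit vector automatically has the right value. Otherwise, after re-indexing we may assume $f_1^\top M f_1 \geq \tr(M)/n \geq f_2^\top M f_2$. Consider the one-parameter rotation $e(\theta) = \cos\theta\,f_1 + \sin\theta\,f_2$ inside the plane $\mathrm{span}(f_1,f_2) \subset V$. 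The map $\theta \mapsto e(\theta)^\top M e(\theta)$ is continuous, equals $f_1^\top M f_1$ at $\theta = 0$ and $f_2^\top M f_2$ at $\theta = \pi/2$, so by the intermediate value theorem some $\theta_0$ produces a unit vector $e_{k+1} \in V$ with $e_{k+1}^\top M e_{k+1} = \tr(M)/n$. The induction terminates after $n$ steps, and setting $v_i := \sqrt{n/\tr(M)}\,e_i$ yields the required orthogonal equal-length vectors in $\E$.

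The only conceivable obstacle is the intermediate value step, but it is entirely elementary: the target value $\tr(M)/n$ lies between the two extreme diagonal entries in $V$ precisely because it is their average (together with the other diagonal entries), so a pair of indices straddling this value always exists whenever the diagonal of $M|_V$ is non-constant. No concern arises at the final stage either, since the trace condition forces the last diagonal entry to agree with the common value automatically.
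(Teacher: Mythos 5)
Your proof is correct, and it takes a genuinely different route from the paper's. The paper inducts on the dimension $n$: it observes that for each $x \in \E$, the slice $x^\perp \cap \E$ is an $(n-1)$-dimensional centred ellipsoid, and by the induction hypothesis it carries an orthogonal equal-length system with a uniquely determined scaling factor $\rho(x)$; comparing $\rho(x)$ with $|x|$ at the major and minor semi-axes and invoking the intermediate value theorem produces an $x_0$ with $|x_0| = \rho(x_0)$, from which the full system is assembled. Your argument, by contrast, builds the orthonormal basis one vector at a time: at each stage the trace identity forces the average of the remaining diagonal entries of $M$ to equal $\tr(M)/n$, so two of them straddle the target value and a Jacobi-type plane rotation $e(\theta) = \cos\theta\, f_1 + \sin\theta\, f_2$ hits it exactly by the intermediate value theorem. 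Your version is essentially the classical proof that a symmetric matrix is orthogonally similar to one with constant diagonal; it is more elementary in that the continuity used is just that of a one-parameter rotation, whereas the paper's proof implicitly relies on $\rho(x)$ being a continuous function of $x$ (which does hold, but requires the prior uniqueness observation and the explicit formula $\rho = \sqrt{(n-1)/\tr(M|_{x^\perp})}$). The paper's proof is perhaps more geometric in flavour, whereas yours is more linear-algebraic and arguably tighter.
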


\begin{proof}
We are going to proceed by induction on $n$. The statement is clearly true for $n=1$. Let the ellipsoid $\E$ be defined as in \eqref{ell}.  Notice that if $z_1, \ldots, z_n \subset \E$ are pairwise orthogonal vectors of norm $\rho$, then
\[
\rho^2 I_n  = \sum_{i=1}^n z_i \otimes z_i
\]
and
\begin{equation}\label{nuell}
\rho^2 M = \sum_{i=1}^n (M^{1/2} \,z_i) \otimes (M^{1/2} \, z_i),
\end{equation}
where the vectors $M^{1/2} \, z_i $ are of norm 1. Thus, by comparing traces in \eqref{nuell}, we obtain that
$\rho = \sqrt{n/  \tr \,M}$; in particular, the constant $\rho$ uniquely belongs to the ellipsoid $\E$.

Assume now that the statement holds for every $(n-1)$--dimensional centred ellipsoid. For every $x \in \E$, $x^\perp \cap \E$ is a centred $(n-1)$--dimensional ellipsoid, thus, it contains a scaled copy of an orthonormal basis with scaling factor $\rho(x)$, where  $\rho(x)$ is a continuous function of $x$. For $x$ being the  minimiser or the maximiser of the Euclidean norm on $\E$, $\rho(x) \geq |x|$ and $\rho(x) \leq |x|$ holds, respectively. Thus, by the intermediate value theorem, there exists a point $x_0$ where $|x_0| =  \rho (x_0)$, yielding an appropriate vector system.
\end{proof}

\begin{theorem}\label{lemmatrix}
Let $M$ be a positive semi-definite, symmetric, real $n \times n$ matrix with trace $N$, where $N \geq n$. Then there exist $N$ unit vectors $(v_i)_1^N$ in $\R^n$, so that
\begin{equation}\label{mvi}
M = \sum_{i=1}^N v_i \otimes v_i.
\end{equation}
\end{theorem}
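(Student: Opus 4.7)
The plan is to prove the theorem by induction on the ambient dimension $n$, with a secondary induction on $N$ in the positive definite case, using Lemma~\ref{elllemma} as the engine in the tight case $N = n$ and peeling off one unit vector at a time when $N > n$. The base $n = 1$ is trivial: $M = (N)$ and $v_1 = \cdots = v_N = 1$ does the job.

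For the outer inductive step I would first handle the singular case by reducing dimension. If $\mathrm{rank}\,M = r < n$ with image $V$, then $M|_V$ is positive definite on $V \cong \R^r$ with trace $N \geq n > r$. The outer inductive hypothesis applied in dimension $r$ yields $N$ unit vectors in $V$ whose tensor squares sum to $M|_V$, and the same identity holds in $\R^n$ since both sides vanish on $V^\perp$.

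When $M$ is positive definite, I would run a secondary induction on $N$. For $N = n$, the ellipsoid $\E = \{x : x^\top M x = 1\}$ is non-degenerate, so Lemma~\ref{elllemma} produces an orthogonal system $z_1, \ldots, z_n \subset \E$ of common norm $\rho = \sqrt{n/\tr M} = 1$; the $z_i$ are then orthonormal, and setting $v_i := M^{1/2} z_i$ gives unit vectors (since $|v_i|^2 = z_i^\top M z_i = 1$) with $\sum v_i \otimes v_i = M^{1/2}\, I\, M^{1/2} = M$, as in \eqref{nuell}. For $N > n$, the average eigenvalue of $M$ exceeds $1$, so $\lambda_{\max}(M) > 1$; choosing a unit eigenvector $v$ of $\lambda_{\max}$, the matrix $M - v \otimes v$ remains positive definite (its eigenvalues are $\lambda_{\max}(M) - 1$ together with the remaining eigenvalues of $M$) with trace $N - 1 \geq n$, and the inner hypothesis supplies the remaining $N - 1$ unit vectors.

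The main delicate point is making the rank-one subtraction compatible with the inductive setup; the top-eigenvector choice keeps the matrix positive definite throughout the inner induction and reduces the trace by exactly $1$, so the inner induction runs cleanly down to the base case $N = n$, where Lemma~\ref{elllemma} finishes the argument, while any initial rank deficiency is absorbed once and for all by the outer induction on $n$.
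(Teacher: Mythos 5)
Your proof is correct and follows essentially the same route as the paper: reduce the singular case by passing to the orthogonal complement of the kernel, peel off a unit eigenvector of the top eigenvalue while $N>n$ (using $\tr M > n \Rightarrow \lambda_{\max}>1$ to keep positive definiteness), and invoke Lemma~\ref{elllemma} in the tight case $N=n$. The only difference is organizational — you phrase the singular-case reduction as an explicit outer induction on $n$, whereas the paper dispatches it with a ``without loss of generality'' — but the substance is identical.
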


\begin{proof}

Without loss of generality, by restricting the vectors $(v_i)$ to lie in the orthogonal complement of the nullspace of $M$, we may assume that $M$ is positive definite.

Next, we show by induction on $N$ that it suffices to consider the case $n = N$. Indeed, suppose that $N>n$, and let $\lambda_n$ be the largest eigenvalue of $M$ with a corresponding eigenvector $w$ of norm 1. By the trace condition, $\lambda_n>1$, hence, we can reduce the problem to the positive definite matrix
\[
M - w \otimes w,
\]
which has trace $N -1$.

Thus, we suppose that $n=N$. By Lemma~\ref{elllemma}, there exist vectors $(z_i)_1^n$ on the ellipsoid $M^{-1/2} S^{n-1}$ and a constant $\rho$  which satisfy \eqref{nuell}:
\[
\rho^2 M = \sum_{i=1}^n (M^{1/2} z_i) \otimes (M^{1/2} z_i).
\]
Since $M^{1/2} z_i$ is a unit vector for every $i$, and $\tr\, M = n$, it follows that $\rho =1$, thus, the above resolution is of the form that we seek.
\end{proof}

Note that a given matrix $M$ may have many representations of the form~\eqref{mvi}. The essential question arises: are the natural topology on the space of $N$-tuples of unit vectors, and the natural topology on the space of $n \times n$ positive semi-definite matrices with trace $N$, induced by each other under this map? The answer is negative. The easiest counterexample is the $n=N=2$ case. Here, the identity matrix has infinitely many resolutions: $I_2 = u_1 \otimes u_1 + u_2 \otimes u_2$, whenever $(u_1, u_2)$ is an orthonormal system. However, any other positive semi-definite $2 \times 2$ real matrix has a unique representation, up to sign changes and the permutation of coordinates. Thus, given a vector system $(u_1, u_2)$ which represents the identity, we may find a small modification $I'$ of $I_2$ so that no small change of the vectors $u_i$ yields a resolution of $I'$. In higher dimensions, the induced topology on the matrix space gets more rich; this sheds light on the non-triviality of Theorem~\ref{thmframe}.

\section{Proof of the characterisation result}

\begin{proof}[Proof of Theorem~\ref{thmframe}]
By intersecting $S^{n-1}$ with the linear span of the vectors, we may assume that $N \geq n$.

Let $G$ denote the Gram matrix corresponding to the vector system $(u_i)_1^N$, that is, the $N \times N$ matrix whose $(i,j)$th entry is $\la u_i , u_j \ra$. If $L$ denotes the $N \times n$ matrix with rows $u_1,\ldots, u_N $, then
\[
G = L L^\top.
\]
On the other hand, the {\em frame operator} $S$ is defined by
\begin{equation}\label{seq}
S = L^\top L = \sum_1^N u_i \otimes u_i.
\end{equation}
The frame potential of the vector system $(u_i)$ is $\|G\|^2 = \tr \, G^2$. Thus, using the commutativity of the trace of a product of matrices,
\begin{equation}\label{GS}
FP\left((u_i)_1^N\right) = \tr \, G^2 = \tr (L L^\top L L^\top) = \tr (L^\top L L^\top L) = \tr \, S^2 = \| S \| ^2.
\end{equation}
Equation \eqref{seq} implies that $\tr \,S = N$. Therefor, by the Cauchy-Schwarz inequality,
\[
\| S \|^2 \geq \sum_{i=1}^n S_{ii}^2 \geq \frac {N^2}{n}\,.
\]
Furthermore, equality can hold only if $S$ is a constant multiple of $I_n$, that is,
\[
\frac N n I_n = \sum_{i=1}^N u_i \otimes u_i.
\]
Comparing with \eqref{fra}, we conclude that the global minimiser vector systems of the frame potential are exactly the normalised tight frames.

If $n= N$, and $(u_i)_1^n$ is a global minimiser of the frame potential, then \eqref{GS} shows that $\|G\|^2 = n$. Taking into account the fact that the diagonal entries of $G$ are $1$, we conclude that $(u_i)_1^n$ must necessarily be an orthonormal sequence.

The rest of the work goes into proving that every local minimiser of the frame potential is also a global minimiser. In light of the remark at the end of the previous section, this is not a trivial statement.

Assume that the vector system $u_1, \ldots, u_N$ in $S^{n-1}$ is a local minimiser of the frame potential.
We shall introduce a {\em local change} as follows: for each $i \in [N]$, let $v_i \in S^{n-1}$ be a vector orthogonal to $u_i$, let $\delta_i \in \R$ a constant close to 0, and define
\[
\widetilde{u_i} = u_i (\delta_i) = \cos \delta_i u_i + \sin \delta_i v_i = u_i + \delta_i v_i - \frac {\delta_i^2} 2 u_i + O(\delta_i^3).
\]
Then, $u_i \otimes u_i$ changes to
\begin{equation}\label{uitensor}
\widetilde{u_i} \otimes \widetilde{u_i} =  u_i \otimes u_i + \delta_i(u_i \otimes v_i + v_i \otimes u_i) + \delta_i^2 (v_i \otimes v_i - u_i \otimes u_i) + O(\delta_i^3),
\end{equation}
thus, the first order change is $\delta_i(u_i \otimes v_i + v_i \otimes u_i)$. Our goal is to find a local modification that decreases $\| S \|^2 = \la S, S \ra$. Let $\widetilde S = \sum \widetilde{u_i} \otimes \widetilde{u_i}$.
For a fixed $i \in [N]$, changing only $u_i$ while keeping the other vectors fixed yields

\[
\|\widetilde S \|^2 = \|S\|^2 + 2 \delta_i \la S, u_i \otimes v_i + v_i \otimes u_i \ra + O(\delta_i^2).
\]
The derivative at $\delta_i=0$ with respect to $\delta_i$ is, by \eqref{tuv},
\[
2 \la S, u_i \otimes v_i + v_i \otimes u_i \ra = 4 \la S u_i, v_i \ra.
\]
This is zero for any unit vector $v_i \in u_i^\top$ if and only if $S u_i$ is a constant multiple of $u_i$, that is, $u_i$ is an eigenvector of $S$. Hence, a vector system may only be a local minimiser, if every vector is an eigenvector of the frame operator.

Therefore, for any local minimiser vector system, there exist $\lambda_1, \ldots, \lambda_N \in \R$  so that $ S \,u_i = \lambda_i u_i$ holds for every $i \in [N]$ . This induces a partition of the system $(u_i)$: for an eigenvector $\lambda$ of $S$, let $U(\lambda)$ denote the set of vectors $u_i$ for which $S \, u_i = \lambda u_i$. Since $S$ is positive semi-definite and symmetric, $U(\lambda) \perp U(\mu)$ whenever $\lambda$ and $\mu$ are distinct. Assume that $\lambda$ is the largest of the eigenvalues for which $U(\lambda)$ is non-empty. By re-numbering, we may assume that the vectors in $U(\lambda)$ are exactly $u_1, \ldots u_k$. Let $H = \textrm{lin}\, U(\lambda)$, the linear span of the vectors $(u_i)_1^k$. Then, the restriction of $S$ to $H$ is $\lambda \, I_H$, a multiple of the identity. On the other hand, \eqref{seq} shows that
\[
\lambda \, I_H = \sum_{i=1}^k u_i \otimes u_i,
\]
thus, $\lambda \dim H = k$. Since $\lambda$ is the largest eigenvalue, it is greater than 1, which implies that $\dim H < k$. Therefore, the vectors $(u_i)_1^k$ are linearly dependent: there exist real constants, not all of which are 0, so that
\begin{equation}\label{uizero}
0 = \sum_{i=1}^k c_i u_i.
\end{equation}
If $(u_i)$ is not a global minimiser, there exists an index $j$, so that $S u_j = \mu u_j$, where $\mu < \lambda$. Set $v = u_j$; then $\la v, u_i \ra = 0$ for every $i = 1, \ldots, k$. For $i = 1, \ldots, k$, let $v_i = v$ and $\delta_i = c_i \delta$, where $\delta$ is a small positive number; for $i = k+1, \ldots, N$, set $\delta_i = 0$. Making use of \eqref{tuv}, \eqref{tensprod}, \eqref{uitensor} and \eqref{uizero} leads to
\begin{align*}
\| \widetilde S \|^2 & =  \| S \|^2 + 2 \delta \sum_{i=1}^k c_i \la S,u_i \otimes v + v \otimes u_i \ra + 2 \delta^2 \sum_{i=1}^k c_i^2 \la S, v \otimes v - u_i \otimes u_i \ra \\ &\quad + \delta^2 \sum_{i,j=1}^k c_i c_j \la  u_i \otimes v + v \otimes u_i, u_j \otimes v + v \otimes u_j \ra + O(\delta^3)\\
&=  \| S \|^2 - 2 \delta^2 ( \lambda - \mu ) \sum_{i=1}^k c_i^2  + 2 \delta^2 \sum_{i,j=1}^k c_i c_j \la u_i, u_j \ra \la v,v \ra + O(\delta^3)\\
&= \| S \|^2 - 2 \delta^2 ( \lambda - \mu) \sum_{i=1}^k c_i^2   + 2 \delta^2 \left| \sum_{i=1}^k c_i u_i \right|^2 + O(\delta^3) \\
&= \| S \|^2 - 2 \delta^2 (\lambda -  \mu) \sum_{i=1}^k c_i^2  + O(\delta^3).
\end{align*}
Recalling that $\mu < \lambda$, this shows that for sufficiently small $\delta$, $\|\widetilde{S}\| < \| S \|$; therefore, $(u_i)_1^n$ cannot be a local minimiser of the frame potential.
\end{proof}


\begin{thebibliography}{99}

\bibitem{BF03} J. J. Benedetto, M. Fickus, {\em Finite normalized tight frames}. Adv. Comp. Math. {\bf 18} (2003),
    357--385.

\bibitem{CFKLT06} P. G. Casazza, M. Fickus, J. Kova\v{c}evi\'{c}, M. T. Leon, J. C. Tremain, {\em A Physical Interpretation of Tight Frames.} Harmonic Analysis and Applications, Applied and Numerical Harmonic Analysis, Birkh\"{a}user  (2006),  51--76.

\bibitem{DS52} R.J. Duffin, A.C. Schaeffer, {\em A class of nonharmonic Fourier series.} Trans. Amer. Math. Soc. {\bf 72}
(1952), 341-–366.

\bibitem{GKK01} V.K. Goyal, J. Kova\v{c}evi\'{c} and J.A. Kelner, {\em Quantized frame expansions with erasures.} Appl. Comput.
Harmonic. Anal. {\bf 10}(3) (2001), 203-–233.

\bibitem{Z01} G. Zimmermann, {\em Normalized tight frames in finite dimensions.} in: {\em Recent Progress in Multivariate
Approximation}, eds. K.J.W. Haussmann and M. Reimer (Birkhäuser, Basel, 2001), 249-–252.

\end{thebibliography}
\end{document}